\newcommand{\cM}{\mathcal M}
\newcommand{\Sph}{\mathbb S}
\newcommand{\bbbz}{\mathbb Z}
\newcommand{\R}{\mathbb R}
\newtheorem{theorem}{Theorem}
\newtheorem*{theorem*}{Theorem}
\theoremstyle{definition}
\newtheorem{remark}[theorem]{Remark}
\def\mvint_#1{\mathchoice
          {\mathop{\vrule width 6pt height 3 pt depth -2.5pt
                  \kern -9pt \intop}\limits_{\kern -3pt #1}}%
          {\mathop{\vrule width 5pt height 3 pt depth -2.6pt
                  \kern -6pt \intop}\nolimits_{#1}}%
          {\mathop{\vrule width 5pt height 3 pt depth -2.6pt
                  \kern -6pt \intop}\nolimits_{#1}}%
          {\mathop{\vrule width 5pt height 3 pt depth -2.6pt
                  \kern -6pt \intop}\nolimits_{#1}}}
\title{Linking topological spheres}
\author[Haj\l{}asz]{Piotr Haj\l{}asz}
\address{Piotr Haj\l{}asz,\newline \indent Department of Mathematics, University of Pittsburgh, \newline \indent 301 Thackeray Hall, Pittsburgh,
Pennsylvania 15260}
\email{hajlasz@pitt.edu}
\thanks{P.H. was supported by NSF grant DMS-1800457.}
\subjclass[2010]{Primary: 57Q45; Secondary: 57M30}
\keywords{Linking number, integer homology spheres, Cannon-Edwards theorem}
\begin{document}

\sloppy

\maketitle

\begin{center}
{\em Dedicated to MathOverflow}
\end{center}

\begin{abstract}
There is a topological embedding $\iota:\Sph^1\to\R^5$ such that
$\pi_3(\R^5\setminus\iota(\Sph^1))=0$. Therefore, no $3$-sphere can be linked with $\iota(\Sph^1)$.
\end{abstract}

If $\Sph^k$ is smoothly embedded in $\R^n$, $n>k+1$, then there is an embedding of $\Sph^{n-k-1}$ so that the two spheres are linked with the liking number equal $1$. 
Now, if $\iota:\Sph^k\to\R^{n}$ is an arbitrary topological embedding, then by the Alexander duality $H_{n-k-1}(\R^n\setminus \iota(\Sph^k))=\bbbz$, so the homology of the complement is the same for all embeddings.  Therefore, one might expect that $\iota(\Sph^k)$ is linked with some ($n-k-1$)-dimensional sphere embedded into the complement of $\iota(\Sph^k)$, so that this embedding of $\mathbb{S}^{n-k-1}$ gives a generator of $H_{n-k-1}(\R^n\setminus \iota(\Sph^k))=\bbbz$. This is the case when $k=n-2$. Indeed, then $n-k-1=1$ and the existence of a linked sphere follows from the fact that that $H_1$ is the abelianization of $\pi_1$, so a generator of $H_1=\bbbz$ is given by a mapping from a circle; this mapping can be approximated by an embedding leading to an embedded $\Sph^1$ that is linked with $\iota(\Sph^k)$ with the linking number $1$. However, the case when $n-k-1>1$ is more subtle.

The purpose of this short note is to construct a somewhat surprising example: there is a topological embedding $\iota_1:\Sph^1\to\R^5$ such that for every embedding $\iota_2:\Sph^3\to\R^5\setminus \iota_1(\Sph^1)$, the spheres $\iota_1(\Sph^1)$ and $\iota_2(\Sph^3)$ are unlinked. More precisely
$\pi_3(\R^5\setminus \iota_1(\Sph^1))=0$ so the embedding of $\Sph^3$ in the complement of $\iota_1(\Sph^1)$ is always homotopic to a constant map. 

The result is a simple consequence of the celebrated Cannon-Edwards theorem \cite{cannon,edwards0,edwards} according to which the double suspension of the integer homology sphere is a topological sphere. This deep and counterintuitive result is often used to construct various counterexamples that are similar in the spirit to the one presented below.

Since the linking number of highly non-smooth topological spheres has recently been used in geometric analysis (cf.\ \cite{GH,HMS,henclm,ziemer}), the author hopes that this example will be of some interest.

Using a one point compactification of $\R^5$ it suffices to prove the following result.
\begin{theorem}
\label{main}
There is a topological embedding $\iota:\Sph^1\to\Sph^5$ such that $\pi_3(\Sph^5\setminus\iota(\Sph^1))=0$.
\end{theorem}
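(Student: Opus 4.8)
The plan is to realize $\Sph^5$ as the double suspension of a suitably chosen integer homology $3$-sphere and to take $\iota$ to parametrize the circle of suspension points. First I would fix an integer homology $3$-sphere $H$ that is \emph{aspherical}, i.e.\ has contractible universal cover; for concreteness take the Brieskorn homology sphere $H=\Sigma(2,3,7)$. Indeed this $H$ is an irreducible $3$-manifold with infinite fundamental group, so $\pi_2(H)=0$ by the sphere theorem, and then its universal cover --- a simply connected non-compact $3$-manifold with trivial reduced homology --- is contractible by Hurewicz and Whitehead. Hence $\pi_k(H)=0$ for every $k\geq 2$; the vanishing $\pi_3(H)=0$ is what will drive the argument.

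Next I would invoke the Cannon--Edwards theorem: since $H$ is a homology $3$-sphere, its double suspension $\Sigma^2 H$ is homeomorphic to $\Sph^5$. By associativity of the join and $\Sph^0*\Sph^0=\Sph^1$ we may write $\Sigma^2 H=\Sph^1 * H$; writing points of this join as classes $[x,y,t]$ with $x\in\Sph^1$, $y\in H$, $t\in[0,1]$, the slice $\{t=0\}$ is a copy of $\Sph^1$ embedded in $\Sph^1 * H$. Composing the resulting inclusion $\Sph^1\cong\{t=0\}\hookrightarrow\Sph^1 * H$ with a homeomorphism $\Sph^1 * H\cong\Sph^5$ produces the required topological embedding $\iota\colon\Sph^1\to\Sph^5$ (which is necessarily a wild embedding).

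It remains to compute $\pi_3$ of the complement. In the join model $(\Sph^1 * H)\setminus\Sph^1=\{[x,y,t]:t>0\}$, and the straight-line homotopy $[x,y,t]\mapsto[x,y,(1-s)t+s]$, $s\in[0,1]$, is a strong deformation retraction of it onto the slice $\{t=1\}$, which is a copy of $H$. Therefore $\Sph^5\setminus\iota(\Sph^1)$ is homotopy equivalent to $H$, and
\[
\pi_3\bigl(\Sph^5\setminus\iota(\Sph^1)\bigr)\cong\pi_3(H)=0,
\]
which is the theorem. (As a check, Alexander duality gives $H_3(\Sph^5\setminus\iota(\Sph^1))=\bbbz=H_3(H)$, with no tension against $\pi_3=0$ since the complement is a $K(\pi_1 H,1)$.)

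Granting the Cannon--Edwards theorem --- which is where the real depth lies --- the only genuinely delicate point is the choice of $H$. The naive choice of the Poincar\'e homology sphere fails, since it has finite nontrivial $\pi_1$ and universal cover $\Sph^3$, so the $\pi_3$ of its complement in $\Sph^5$ would be $\bbbz$ rather than $0$. The construction succeeds precisely because aspherical homology $3$-spheres exist: deleting the suspension circle then leaves an aspherical complement, so all its higher homotopy groups vanish even though its third homology does not.
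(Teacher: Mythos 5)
Your proposal is correct and follows essentially the same route as the paper: choose an aspherical integer homology $3$-sphere, apply the Cannon--Edwards double suspension theorem, and deformation-retract the complement of the suspension circle onto the homology sphere to conclude $\pi_3=0$. The only differences are cosmetic --- you justify asphericity of $\Sigma(2,3,7)$ via irreducibility and the sphere theorem where the paper cites hyperbolic homology spheres with universal cover $\R^3$, and you phrase the retraction in explicit join coordinates where the paper retracts in two suspension steps.
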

\begin{proof}
It is well known that there are $3$-dimensional integer homology spheres whose universal cover is $\R^3$. For example, there are particular constructions in \cite{baldwins,brockd,homl} of hyperbolic integer homology spheres. Note that the universal cover of a hyperbolic $3$-manifold is the hyperbolic space that is homeomorphic to $\R^3$. 
Other examples are listed in \cite{M1}.
Let $\cM$ be such an integer homology sphere. Since the universal cover of $\cM$ is contractible, $\pi_3(\cM)=0$. According to the celebrated theorem of Cannon and Edwards \cite{cannon,edwards0,edwards}, the double suspension of an integer homology sphere is homeomorphic to a topological sphere. Let $h:S^2\cM\to\Sph^5$ be such a homeomorphism.
$\cM$ is a deformation retract of the complement of the vertices of the suspension $S\cM$. 
Therefore, $\cM$ is also a deformation retract of the complement of the suspension of the vertices in $S^2\cM$. Denote the suspension of the vertices by $X$, so $\cM$ is a deformation retract of $S^2\cM\setminus X$ and hence $\pi_3(S^2\cM\setminus X)=0$. 
$X$ is homeomorphic to $\Sph^1$. If $g:\Sph^1\to X$ is a homeomorphism, then $\iota=h\circ g:\Sph^1\to\Sph^5$ is a topological embedding and clearly $\pi_3(\Sph^5\setminus\iota(\Sph^1))=\pi_3(\Sph^5\setminus h(X))=\pi_3(S^2\cM\setminus X)=0$.
The proof is complete.
\end{proof}

\begin{remark}
Taking the higher suspensions of the hyperbolic integer homology sphere we obtain embeddings of $\iota:\Sph^k\to\Sph^{k+4}$, $k\geq 1$, such that $\pi_3(\Sph^{k+4}\setminus \iota(\Sph^k))=0$, so 
no $3$-sphere can be linked with $\iota(\Sph^k)$.
\end{remark}

\begin{remark}
If in the above construction we replace the hyperbolic homology sphere by the standard Poincar\'e homology sphere $\cM$, the linking number of the embedding of $\mathbb{S}^3$ to $\Sph^5\setminus\iota(\Sph^1)$ will be the multiplicity of $120$. Indeed, 
$|\pi_1(\cM)|=120$ and hence the degree of a map $f:\mathbb{S}^3\to M^3$ is a multiple of $|\pi_1(\cM)|=120$.
\end{remark}

\noindent{\bf Acknowledgements.}
The author would like to express his deepest gratitude to the Mathoverflow community; without their help this short note would have never been written. In particular he would like to thank Ian Agol, Jason DeBlois, Neil Hoffman, John Klein, Thilo Kuessner, Andrew Putman, Daniel Ruberman, see \cite{M1,M2}. The author is also grateful to the referee for the valuable comments.


\begin{thebibliography}{99}

\bibitem{baldwins}
{\sc Baldwin, J., Sivek, S.:}
Stein fillings and $SU(2)$ representations. {\em Geom.\ Topol.} 22 (2018), 4307-4380.

\bibitem{brockd}
{\sc Brock, J., Dunfield, N. M.:}
Injectivity radii of hyperbolic integer homology $3$-spheres.
{\em Geom.\ Topol.} 19 (2015),  497-523. 

\bibitem{cannon}
{\sc Cannon, J. W. :} Shrinking cell-like decompositions of manifolds. Codimension three. 
{\em Ann.\ of Math.} 110 (1979), 83-112.

\bibitem{edwards0}
{\sc Edwards, R. D.:}
Suspensions of homology spheres.
(Paper written in 1970's that has never been published), (2006) arXiv:math/0610573.

\bibitem{edwards}
{\sc Edwards, R. D.:} The topology of manifolds and cell-like maps. 
{\em In:} Proceedings of the International Congress of Mathematicians (Helsinki, 1978), pp. 111-127, Acad.\ Sci.\ Fennica, Helsinki, 1980.

\bibitem{GH}
{\sc Goldstein, P., Haj\l{}asz, P.:}
Jacobians of $W^{1,p}$ homeomorphisms, case $p=[n/2]$.
(2018) arXiv:1812.11888


\bibitem{henclm}
{\sc Hencl, S. Mal\'y, J.:}
Jacobians of Sobolev homeomorphisms. {\em Calc.\ Var.\ Partial Differential Equations} 38 (2010), 233-242.

\bibitem{HMS}
{\sc Haj\l{}asz, P., Mirra, J., Schikorra, A.:}
H\"older continuous mappings, differential forms and the Heisenberg groups.
(In preparation.)

\bibitem{homl}
{\sc Hom, J., Lidman, T.:}
A note on surgery obstructions and hyperbolic integer homology spheres.
{\em Proc.\ Amer.\ Math.\ Soc.} 146 (2018), 1363-1365. 


\bibitem{M1}
{\sc Mathoverflow.} Homology sphere with $\R^3$ as the universal cover.\\
https://mathoverflow.net/q/321640/121665.

\bibitem{M2}
{\sc Mathoverflow.} Linking topological spheres.\\
https://mathoverflow.net/q/294953/121665

\bibitem{ziemer}
{\sc Ziemer, W. K.:}
A note on the support of a Sobolev function on a $k$-cell.
{\em Proc.\ Amer.\ Math.\ Soc.} 132 (2004),  1987-1995. 

\end{thebibliography}
\end{document}